\documentclass[runningheads]{llncs}
\usepackage[T1]{fontenc}
\usepackage{amssymb}
\usepackage{graphicx}
\usepackage[all]{xy}
\usepackage{amsmath}

\newtheorem{thm}{Theorem}

\begin{document}
\title{Image Recognition via Vaisman--Neifeld's Geometry}
%
%
\author{No\'emie C. Combe\inst{1}\orcidID{0000-0003-4540-7376} \and
Hanna K. Nencka }
\authorrunning{N. Combe et al.}
%
\institute{University of Warsaw, Ul. Banacha 2, 02-097 Warsaw, Poland 
\email{n.combe@uw.edu.pl}\\
\url{https://noemie-combe-23.webself.net} \\}

\maketitle              
\begin{abstract}
We introduce a new approach to the reconstruction of hidden structures from incomplete data, unifying techniques from geometric integration and topological analysis within the pioneering frameworks of Vaisman and Neifeld. Our method transcends traditional iterative schemes by employing a refined geometric decomposition of configuration spaces into invariant foliations and moment maps, thereby resolving the intrinsic ambiguities of underdetermined inverse problems. By synergistically combining Vaisman’s deep insights into symmetry and Neifeld’s analytic methodologies, we establish a robust, noise-resistant paradigm that not only ensures computational tractability but also fundamentally redefines the landscape of reconstruction in imaging and structural analysis. This framework paves the way for transformative applications across diverse scientific domains, heralding a new era in the synthesis of geometry and topology for inverse problem solving.
\keywords{Shape Space Theory   \and  Geometric Learning \and Fiber bundles and Foliations.}
\end{abstract}

\section{Introduction}
Reconstruction problems stand among the biggest challenges in mathematics and applied sciences: \emph{How does one rebuild a hidden structure from fragmented, incomplete, or distorted data?} Whether it is deducing the three-dimensional conformation of a protein from blurry two-dimensional microscope images \cite{CSSS}, or inferring the structure of a graph from partially observed subgraphs, these inverse problems lie at the core of diverse disciplines—ranging from cryo-electron microscopy and  tomography to astronomy, radio-astronomy, medicine and graph theory as well as modern machine learning.

The intrinsic difficulty of reconstruction is underscored by the fact that infinitely many structures may be consistent with the same incomplete data. In practice, traditional methods, such as iterative algorithms, are hampered by three fundamental challenges:
\begin{enumerate}
    \item \textbf{Uniqueness:} Does the available data suffice to uniquely determine the original structure?
    \item \textbf{Noise:} How can one mitigate the deleterious effects of noise and measurement imperfections inherent in real-world data?
    \item \textbf{Complexity:} How does one efficiently navigate the vast, often combinatorially complex, space of potential solutions?
\end{enumerate}

In this work, we develop a new framework for reconstruction that both unifies and generalizes previous approaches by harnessing the deep geometric insights of the Vaisman \cite{V} and Neifeld \cite{N} methodologies. Our construction is firmly rooted in the contributions of Gelfand--Goncharov \cite{GG}. 

The {\bf Vaisman framework} interprets reconstruction as a geometric unraveling of symmetry. It resolves the inherent multiplicity of solutions in underdetermined inverse problems by inducing a stratified decomposition of the configuration space into transverse foliations. Each foliation represents an equivalence class of solutions that are indistinguishable under a given projection operator, thereby transforming the ambiguity into a structured hierarchy of invariant submanifolds. The intersection of these orthogonal layers of constraints is isolated, which in turn guarantees unique solutions and robustness against noise. For instance, objects that yield identical two-dimensional images naturally lie on the same leaf of the foliation.

{\it Applications} of our method includes {\bf medicine} (reconstructing tumours from sparse MRI slices with error guarantees); {\bf quantum computing} (inferring quantum states from noisy partial measurements) and  {\bf artificial intelligence} (training generative models to impute missing data through geometric manifold learning and differential topological constraints). 

{\small {\bf Acknowledgements} The authors thank Philippe Combe for comments and discussions on this paper. This research is part of the project No. 2022/47/P/ST1/01177 co-founded by the National Science Centre and the European Union's Horizon 2020 research and innovation program, under the Marie Sklodowska Curie grant agreement No. 945339.For the purpose of Open Access, the author has applied a CC-BY public copyright licence to any Author Accepted Manuscript (AAM) version arising from this submission.
\includegraphics[width=1cm, height=0.5cm]{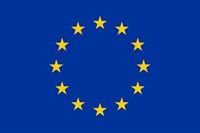}.}

\section{Projections and Inverse Morphisms}
 We consider at least two independent projections of a three-dimensional object onto distinct two-dimensional planes. The existence of such independent projections is necessary to ensure sufficient information for reconstruction. This leads to the problem of defining an inverse function that maps the planar projections back to the original volumetric object.

\begin{itemize}
\item A single projection collapses 3D information onto a plane, losing depth and orientation data. For example, in electron microscopy, a single micrograph of a particle provides no information about its tilt angle relative to the imaging axis.

\item By considering projections along distinct planes, the system gains orthogonal constraints (e.g., tilt angles and rotation axes) that resolve ambiguities. This aligns with Gelfand and Goncharov’s method, which uses statistical properties of projections, such as first moments of plane sections, to derive orientation parameters.
\end{itemize}

\subsection{Gelfand--Goncharov's Viewpoint}  The Radon transform $Rf(\theta,t)$ integrates a function 
$f(x,y,z)$ along planes parameterized by angle 
$\theta$ and offset $t$. Its inverse requires integrating over all possible angles, but practical applications (e.g., cryo-EM) use finite projections. 

 For discrete objects (e.g., particles on a line), the inverse function can be constructed as a linear system where each projection contributes equations. Two independent projections ensure the system is determined (solvable) under non-degenerate conditions \cite{GG}.

\medskip 

 A key idea is to leverage the results of Neifeld on geometric projective 2-dimensional spaces. From these results, it follows that the independent projections induce a pair of independent connections. These connections allow us to extract curvature-related information in two dimensions, including the Riemann tensor and Ricci curvature. Understanding whether these curvature tensors arise from an associative or commutative algebraic structure further informs the reconstruction process.

\, 

Neifeld’s involution principle bridges projective geometry and differential geometry, enabling a dual-connection framework for 3D reconstruction. By interpreting projections as inducing independent connections, this approach generalizes classical integral geometry methods and enhances their robustness, particularly in complex or symmetric settings. Further work could explore applications to quantum state tomography (via projective Hilbert spaces) or algebraic varieties in $\mathbb{CP}^n$.

\subsection{A Generalisation of Goncharov's Reconstruction Method} 
\subsubsection{Goncharov's Construction via the Structure Diagram} We first recall Goncharov's construction, which  discusses families of submanifolds using the language of double fibrations.
Recall that a double fibration is a diagram of manifolds:
\[
\xymatrix{
& A \ar[dl]_{\pi_1} \ar[dr]^{\pi_2} & \\
B & & T
}
\]
where $A$ is a manifold with projections $\pi_1:A\to B$ and $\pi_2:A\to T$ and the combined map 
\[
\pi_1 \times \pi_2 \colon A \to B \times T
\]
is an embedding. For \( x \in B \) and \( t \in T \), define
\[
B_t := \pi_1\bigl(\pi_2^{-1}(t)\bigr), \quad \Gamma_x := \pi_2\bigl(\pi_1^{-1}(x)\bigr).
\]
Thus, the double fibration determines a family \(\{B_t\}\) of submanifolds in \(B\) as well as a dual family \(\{\Gamma_x\}\) of submanifolds in \(T\).
\subsubsection{Geometric realization} Conversely, a family \(\{B_t\}\) of submanifolds in \(B\) defines explicitly the incidence submanifold \( A \subset B \times T \) as:
\[
A:= \{ (x,t) \in B \times T \mid x \in B_t \}.
\]

The projections of \( A \) onto the factors \( B \) and \( T \) then yield a double fibration and the embedded  \( A \subset B \times T \) 
is precisely the incidence submanifold $\{ (x,t) \in B \times T \mid x \in B_t \}$.
~

\begin{remark} 
In integral geometry (e.g., Radon transforms), the incidence submanifold  \( A \subset B \times T \)  is where the kernel of the integral operator is defined. 
\end{remark}
~

Goncharov introduces a {\it Geometric Method for recovering the Mutual Orientation particles
from the Projections} (\cite[Sec. 2]{G}). Using Neifeld and Vaisman's geometrical approach,  we {\it generalize} this below. 

\subsection{Our Generalization of Goncharov's Geometric Method } We start the generalisation by introducing the following proposition. 
\begin{thm}\label{P:1} Let \( O \subset \mathbb{CP}^3 \) be a smooth, non-symmetric 3D object (real 3-dimensional submanifold), and let \( \Pi_1, \Pi_2: \mathbb{CP}^3 \dashrightarrow \mathbb{CP}^2 \) be two independent rational projections onto distinct complex projective planes. Assume:

\begin{enumerate}
\item  {\bf Non-degeneracy of projections}: The restrictions of \( \Pi_1 \) and \( \Pi_2 \) to \( O \) are immersive (the differentials \( d\Pi_i \) have full rank).

\item  {\bf Involution symmetry:} The projections satisfy \(\Pi_2 = \iota \circ \Pi_1 ,\) where \( \iota: \mathbb{CP}^2 \to \mathbb{CP}^2 \) is an anti-holomorphic involution (e.g., a polarity induced by the Fubini-Study metric).

\end{enumerate}

Each projection \( \Pi_i \) defines a holomorphic line bundle \( L_i \to \mathbb{CP}^2 \) equipped with a connection \( \nabla_i \) derived from the Fubini-Study metric. Then, the  involution \( \iota \) induces a duality \[ L_1 \leftrightarrow L_2^* ,\] making \( (\nabla_1, \nabla_2) \) a dual pair.
\end{thm}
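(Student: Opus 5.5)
We first make the statement precise, since as written it leaves the bundles and connections underdetermined. We will take $L_1 := \mathcal{O}_{\mathbb{CP}^2}(1)$ (the line bundle with respect to which $\Pi_1$ is given by sections) with its Fubini--Study hermitian metric $h$ and Chern connection $\nabla_1$, whose curvature is $F_{\nabla_1} = -2\pi i\,\omega_{FS}$. Because $\Pi_2 = \iota\circ\Pi_1$ with $\iota$ anti-holomorphic, we cannot simply set $L_2 = \iota^*L_1$: that bundle is anti-holomorphic. The natural holomorphic object is the conjugate $L_2 := \overline{\iota^*L_1}$, equipped with the pulled-back metric $\iota^*h$ and its Chern connection $\nabla_2$. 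We will interpret ``a polarity induced by the Fubini--Study metric'' as an anti-holomorphic isometry of $\mathbb{CP}^2$. An example is complex conjugation $[z]\mapsto[\bar z]$, or its composition with a unitary map. We will not use the polarity $\mathbb{CP}^2\to(\mathbb{CP}^2)^*$ literally, because it does not map the plane to itself. With this reading, hypothesis~1 and the non-symmetry of $O$ are only needed to pull everything back along the immersions $\Pi_i|_O$. The core claim is a statement on $\mathbb{CP}^2$.

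The proof proceeds in three steps.
\begin{enumerate}
\item A hermitian metric gives a conjugate-linear isomorphism $\overline{L}\cong L^*$, namely $v\mapsto h(\cdot,v)$. Since $\iota$ is an isometry of $\omega_{FS}$ reversing the complex structure, there is a canonical identification $\iota^*L_1 \cong \overline{L_1}$ as hermitian bundles. For complex conjugation this is explicit: conjugate the tautological line fibrewise. Combining the two, we get $L_2 = \overline{\iota^*L_1}\cong L_1 \cong \overline{L_1}^{\,*}$. After tracking conjugations, this says that $h$ furnishes a nondegenerate pairing $\langle\cdot,\cdot\rangle : L_1\otimes L_2\to\underline{\mathbb{C}}$. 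This is the asserted duality $L_1\leftrightarrow L_2^*$.
\item We show the pairing is parallel, that is,
\[
d\langle s,t\rangle=\langle\nabla_1 s,t\rangle+\langle s,\nabla_2 t\rangle .
\]
This follows from two facts. First, the Chern connection is metric-compatible. Second, by naturality, the Chern connection of the conjugate pullback is the conjugate of the pullback of the Chern connection, because $\iota$ is an isometry and conjugation exchanges $(1,0)$ and $(0,1)$ parts. This identity is precisely the definition of $(\nabla_1,\nabla_2)$ being a dual pair.
\item As a consistency check, we verify the curvatures: $\iota^*\omega_{FS}=-\omega_{FS}$ for an anti-holomorphic isometry, which gives $F_{\nabla_2}=-F_{\nabla_1}$ under the identification. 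This is the expected relation for the connection on a dual bundle.
\end{enumerate}
Finally, we pull back along $\Pi_i|_O$, which are immersions by hypothesis~1, to obtain the dual pair on $O$ itself.

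The main obstacle is Step~1: getting the conjugations and the direction of the isomorphism right. One must check that $\overline{\iota^*L_1}$ is genuinely holomorphic. One must also check that the chosen identification $\iota^*L_1\cong\overline{L_1}$ lifts $\iota$ compatibly with the tautological bundle. For a general anti-holomorphic involution this lift exists only up to a unitary twist. We would first do the computation for complex conjugation in homogeneous coordinates, where everything is explicit. We would then transport the result to an arbitrary anti-holomorphic isometric involution via $PU(3)$-equivariance of the Fubini--Study structure.
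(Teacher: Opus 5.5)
\paragraph{The gap.} The gap is in Step~1, and it is caused by the extra conjugation in your definition $L_2:=\overline{\iota^*L_1}$.

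Your own chain of isomorphisms gives $L_2\cong L_1=\mathcal{O}_{\mathbb{CP}^2}(1)$. Equivalently, $\iota^*$ acts by $-1$ on $H^2(\mathbb{CP}^2;\mathbb{Z})$ and conjugation negates $c_1$ once more, so $c_1(L_2)=-\iota^*c_1(L_1)=c_1(L_1)=h$, the hyperplane class. Hence $c_1(L_1\otimes L_2)=2h\neq 0$. There is therefore no nondegenerate complex-bilinear pairing $L_1\otimes L_2\to\underline{\mathbb{C}}$, and $L_1\not\cong L_2^*$: for your $L_2$ the asserted duality is false.

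What $h$ really furnishes is a sesquilinear pairing, i.e.\ $L_2\cong\overline{L_1}^{\,*}$. Since $L_2\cong L_1$, this is nothing but the hermitian metric of $L_1$ and carries no information about $\iota$. Your ``tracking conjugations'' dropped exactly this bar.

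This propagates to the later steps. Step~2 only restates that a single Chern connection is metric, so your ``dual pair'' is one self-dual connection. The curvature check in Step~3 is wrong: there are two sign changes, not one,
\[
F_{\nabla_2}=\overline{\iota^*F_{\nabla_1}}=\overline{2\pi i\,\omega_{FS}}=-2\pi i\,\omega_{FS}=F_{\nabla_1}.
\]
Pulling back by an anti-holomorphic isometry flips the sign, and conjugating an imaginary $2$-form flips it again. Done carefully, your consistency check would have exposed the problem.

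\paragraph{The repair.} Use the paper's choice: take $L_2=\iota^*L_1$ with the pulled-back connection $\iota^*\nabla_1$, and no conjugation.

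Then $\iota^*L_1\cong\overline{L_1}$ complex-linearly. The isomorphism is induced by fibrewise conjugation of the tautological line when $\iota(z)=\bar z$, which is the paper's $\iota(z)=H^{-1}\bar z^{T}$ with $H=I$. Also $\overline{L_1}\cong L_1^*$ via $h$, so $L_2\cong L_1^*$.

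The paper then compares curvatures, $F_{\iota^*\nabla_1}=\iota^*F_{\nabla_1}=-F_{\nabla_1}=F_{\nabla_1^*}$. It concludes that the two hermitian connections coincide up to gauge, which is legitimate because $\mathbb{CP}^2$ is simply connected.

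Your parallel-pairing idea from Step~2 also works once the bar is removed, and it avoids the gauge argument:
\begin{itemize}
\item Fibrewise conjugation intertwines $\iota^*\nabla_1$ with $\overline{\nabla_1}$, because it commutes with the orthogonal projections that define the Chern connection of the tautological bundle.
\item The metric $h$ intertwines $\overline{\nabla_1}$ with $\nabla_1^*$.
\end{itemize}
So the induced complex-bilinear pairing $L_1\otimes\iota^*L_1\to\underline{\mathbb{C}}$ is parallel.

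Finally, your worry about holomorphicity does not force the conjugation. The curvature $F_{\iota^*\nabla_1}=2\pi i\,\omega_{FS}$ is of type $(1,1)$, so $(\iota^*\nabla_1)^{0,1}$ is an integrable holomorphic structure on $\iota^*L_1$. With it, $\iota^*L_1$ has degree $-1$ and is therefore $\mathcal{O}_{\mathbb{CP}^2}(-1)$.
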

We make two remarks, before the proof: 
\begin{enumerate}
\item[(1)] Notice that the {\it immersiveness} of the projection \( \Pi_i \), restricted to the object $O$, for $i\in \{1,2\}$ cannot be assumed unconditionally. It depends on the relationship between the geometry of the object $O$ and the center of projection in $ \mathbb{CP}^3$. Precisely, let $q$ be the center of the projection, chosen generically (i.e. avoiding all tangent planes of $O$). Then  \( \Pi_i \) restricted to $O$ becomes immersive. Conversely, if $q$ lies on a tangent plane of $O$ then the center is no longer generic and the differential  \( \Pi_i \) restricted to $O$ is no longer immersive since $d\Pi_i|_O$ has a kernel.
\item[(2)] The polarity induced by the Fubini-Study metric is a long standing result of classical differential geometry. The Fubini-Study metric defines a Hermitian inner product $\langle -,-\rangle_H$ on $\mathbb{CP}^n$. This allows us to assign to every point $[z]\in \mathbb{CP}^2$ its polar hyperplane {\bf H}$_p$ (given by the set of points orthogonal to $[z]:\, \{[w]\in\mathbb{CP}^2\, |\, \langle z,w\rangle_H=0\})$. These hyperplanes live in the dual projective space $(\mathbb{CP}^2)^\vee$. The Fubini-Study metric provides a canonical identification $ \mathbb{CP}^2\cong (\mathbb{CP}^2)^{\vee}$. This construction defines an anti-holomorphic involution  $\iota$ on $\mathbb{CP}^2$, where the point $p$ is mapped to a point in  $\mathbb{CP}^2$ corresponding to its orthogonal hyperplane {\bf H}$_p$. 
\end{enumerate}
\begin{proof}
1. We first discuss the duality of line bundles.
\smallskip 
Let $\mathcal{O}_{\mathbb{CP}^2}(1)$ be the hyperplane bundle on $\mathbb{CP}^2$. Its dual is  $\mathcal{O}_{\mathbb{CP}^2}(-1)= \mathcal{O}_{\mathbb{CP}^2}(1)^*$. The involution $\iota$ is induced by  the Fubini-Study polarity, which is defined by a Hermitian form $H$ on $\mathbb{C}^3$. In homogeneous coordinates, for 
$z = [z_0 : z_1 : z_2] \in \mathbb{CP}^2,$
we have
$\iota(z) = H^{-1} \overline{z}^T$, where  $H = \mathrm{diag}(1, 1, 1)$ in the standard case.

The key isomorphism is:
\[
\iota^* \mathcal{O}_{\mathbb{CP}^2}(1) \cong \mathcal{O}_{\mathbb{CP}^2}(-1).
\]

This holds because sections of $\mathcal{O}_{\mathbb{CP}^2}(1)$ are linear forms $s = \sum a_i z_i.$ The pullback section $\iota^*s$ is $s \circ \iota = \sum a_i (H^{-1} \overline{z}^T)_i$. 

Since the Fubini-Study metric identifies 
$\overline{\mathcal{O}_{\mathbb{CP}^2}(1)} \cong \mathcal{O}_{\mathbb{CP}^2}(1)^* \quad \text{via } v \mapsto H(\cdot, v),$
we obtain:
\[
\iota^* \mathcal{O}_{\mathbb{CP}^2}(1) \cong \mathcal{O}_{\mathbb{CP}^2}(-1).
\]

Thus, if $L_1$ is a line bundle on $\mathbb{CP}^2$, and we define $L_2 = \iota^* L_1$, then $L_2 \cong L_1^*.$

\medskip 

2. We now discuss the duality of connections. The Chern connection $\nabla_1$ on $L_1 = \mathcal{O}_{\mathbb{CP}^2}(1)$
is defined by the Fubini-Study metric. Its curvature is proportional to the Kähler form $\omega_{\mathrm{FS}}$. Given a connection $\nabla$ on a vector bundle $E\to M$, its curvature $F_{\nabla}$ is the 2-form-valued endomorphism measuring the failure of $\nabla$ to be flat. Formally:
\[F_{\nabla}(X,Y)=\nabla_X\nabla_Y-\nabla_Y\nabla_X-\nabla_{[X,Y]},\] where $X,Y$ are vector fields on $M$. 

Since $\iota$ is an isometry (i.e., $\iota^* \omega_{\mathrm{FS}} = -\omega_{\mathrm{FS}}$), the pullback connection $\iota^* \nabla_1$ on 
$L_2 = \iota^* L_1$
satisfies:
$F_{\iota^* \nabla_1} = \iota^* F_{\nabla_1} = \iota^*(c \, \omega_{\mathrm{FS}}) = -c \, \omega_{\mathrm{FS}},$
where $c$ is the scaling factor. 
The dual connection $\nabla_1^*$ on $L_1^*$ has curvature:
$F_{\nabla_1^*} = -F_{\nabla_1} = -c \, \omega_{\mathrm{FS}}.$

Thus, we conclude:
\[
F_{\iota^* \nabla_1} = F_{\nabla_1^*}.
\]

Connections with equal curvature on a line bundle are gauge-equivalent. Since both $\iota^* \nabla_1$ and $\nabla_1^*$ are Hermitian (i.e., they preserve the respective Hermitian metrics), they must be equal:

\[
\nabla_2 = \iota^* \nabla_1 = \nabla_1^*.
\]
Therefore, we have shown that $L_2 \cong L_1^*$ and that $\nabla_2=\nabla_1^*$. Thus $(\nabla_1,\nabla_2)$ form a dual pair under the involution $\iota$.
\end{proof}

\subsection{Centroids}

The \emph{centroid} (or first moment) of a geometric shape is its ``average position'' in space, computed as the arithmetic mean of all points in the object. Specifically, when an optical device, such as a telescope or microscope, captures an image, it effectively applies a projection operator that maps a three-dimensional object onto a two-dimensional plane, resulting in data that represents the light intensity distribution across that plane. The centroid of this projection is defined as the ``center of mass'' of the light distribution, computed as the weighted average of the pixel positions, and it encapsulates essential spatial information about the object’s location. This concept is crucial in modern technical devices, as accurately determining the centroid facilitates reconstruction, alignment, and tracking of objects from their projected images, thereby enhancing image processing and analysis.

For a projection 
\[
\Pi_i: \mathbb{R}^3 \to \mathbb{R}^2,
\]
the centroid \(\bar{p}_i \in \mathbb{R}^2\) is given by
\[
\bar{p}_i = \left( \frac{1}{N}\sum_{k=1}^{N} x_k, \; \frac{1}{N}\sum_{k=1}^{N} y_k \right),
\]
where \((x_k,y_k)\) are the coordinates of the projected points and \(N\) is the total number of points.

The centroid encodes the translational symmetry of the projected data. For example, shifting the 3D object in space shifts the centroid linearly in the projection.

\subsubsection{Moment Maps}
A \emph{moment map} generalizes the concept of centroids to algebraic/geometric settings, often encoding symmetry-invariant properties of an object.

The moment map 
\[
\mu_i: O \to \mathbb{C}^2
\]
assigns to the 3D object \(O\) the centroid of its projection onto the plane \(\Pi_i\). If \(O\) has a density distribution, \(\mu_i\) computes the first statistical moment (mean) of the projection. For a line or curve, \(\mu_i\) corresponds to the centroid of its projected trace.

Each \(\mu_i\) provides a linear constraint on the orientation of \(O\). Combining \(\mu_1\) and \(\mu_2\) (from two distinct projections) resolves ambiguities in the 3D orientation.

\subsection{Centroid as a Moment Map in Vaisman's Geometric Framework}
In the framework pioneered by Vaisman, the \emph{centroid}—or first moment—of projected data emerges as a fundamental geometric invariant in the reconstruction problem. Specifically, for a three-dimensional object projected onto a two-dimensional plane, the centroid encapsulates the ``average position” of the object's mass distribution. In this setting, the centroids naturally assume the role of moment maps: algebraic invariants that impose linear constraints on the possible orientations and shapes of the original object.

\subsection{Applications and Limitations}

The centroid-based reconstruction framework finds natural applications across a broad spectrum of disciplines:
\begin{itemize}
    \item \textbf{Structural Biology:} In cryo-electron microscopy, the centroids of 2D projections can determine particle orientations, facilitating the accurate reconstruction of protein structures.
    \item \textbf{Network Analysis:} In the study of graphs, classification may be achieved by focusing on zero-dimensional topological features (i.e., connected components), thus avoiding the complexity of higher-dimensional persistent invariants.
   \item \textbf{Medical Imaging:} Techniques for analyzing physiological signals, such as heart rate variability, can benefit from centroid-based approaches that reduce computational overhead without compromising the integrity of the reconstruction.
\end{itemize}

\begin{thm}\label{P:2}
Let \( \mu_i: O \to \mathbb{C}^2 \) be the first moment maps of \( O \) for projections \( \Pi_i \), encoding the centroids of the projected data, and $\nabla_i$ Chern conections on bundles $E_i=\Pi_i^*T\mathbb{CP}^2$ with curvatures $F_{\nabla_i}$.

\smallskip 

Under these conditions:

\begin{itemize}
\item  {\bf Uniqueness}: The orientation of \( O \) (i.e., its position modulo projective transformations) is uniquely determined by the compatibility of \( \nabla_1 \) and \( \nabla_2 \) acting on \( \mu_1 \) and \( \mu_2 \).

\item {\bf Reconstruction}: The original object \( O \) can be reconstructed as the intersection of the parallel transports along \( \nabla_1 \) and \( \nabla_2 \), applied to the moment maps \( \mu_1 \) and \( \mu_2 \).
\end{itemize}
Explicitly, there exists a unique solution \( v \in T\mathbb{CP}^3 \), a {\bf direction vector} (modulo scaling) in the ambiant space, satisfying:

\[
\begin{cases}
\nabla_1 \mu_1 = v \cdot \omega_1, \\
\nabla_2 \mu_2 = v \cdot \omega_2, 
\end{cases}
\]

where \( \omega_i \) are {\bf connection 1-forms} encoding the involution duality \( \omega_1 = \iota^* \omega_2 \);
 $\nabla_i \mu_i$ is the covariant derivative of $\mu_i$, resulting in a $\mathbb{C}^2$-valued 1-form on $O$ \footnote{ The symbol $``\cdot"$ denotes the contraction of the vector $v$ with the $\mathbb{C}^2$-valued  1-form $\omega_i$. Concretely, 
$v \cdot \omega_i$ is the vector in $\mathbb{C}^2$ obtained by evaluating each $\omega_i^j$ on $v$: 

\[ v \cdot \omega_i=\bigg(\begin{smallmatrix} \omega_i^1(v)\\ \omega_i^2(v)\end{smallmatrix}\bigg)\]}. 
\end{thm}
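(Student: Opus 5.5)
The statement mixes a pointwise linear-algebra claim (existence and uniqueness of $v$ modulo scaling) with a global one (reconstruction of $O$ by parallel transport). I would first turn the displayed system into a precise pointwise problem and then globalize with Theorem~\ref{P:1}. Fix $p\in O$. Both sides of $\nabla_i\mu_i = v\cdot\omega_i$ are then $\mathbb{C}^2$-valued, and the right side is linear in $v\in T_p\mathbb{CP}^3$. Stacking the two equations gives one linear map
\[
\Omega_p : T_p\mathbb{CP}^3 \to \mathbb{C}^2\oplus\mathbb{C}^2,\qquad v\mapsto (v\cdot\omega_1,\ v\cdot\omega_2),
\]
and the system reads $\Omega_p(v)=(\nabla_1\mu_1,\nabla_2\mu_2)(p)$. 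I read "compatibility of $\nabla_1$ and $\nabla_2$ acting on $\mu_1,\mu_2$" as exactly the hypothesis that the right-hand side lies in $\operatorname{im}\Omega_p$. I would state this explicitly as the standing assumption, because without it existence fails for dimensional reasons: $T_p\mathbb{CP}^3$ has complex dimension $3$ and the target has dimension $4$.

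\textbf{Uniqueness modulo scaling.} I would identify $\omega_i$ with the pullback along $d\Pi_i$ of the Chern connection forms on $E_i=\Pi_i^*T\mathbb{CP}^2$. Each $d\Pi_i$ is a linear projection from its center $q_i$, so $\ker d\Pi_i$ is the line through $q_i$. Independence of the two projections means distinct centers, hence $\ker d\Pi_1\cap\ker d\Pi_2=0$. If this identification of $\omega_i$ with $d\Pi_i$ (up to an invertible endomorphism) is right, then $\Omega_p$ is injective and any solution is unique. The "modulo scaling" clause then comes from working projectively: rescaling the homogeneous representative rescales $\mu_i$, and so $v$, by the same factor. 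The immersivity assumption in Theorem~\ref{P:1} keeps the rank condition from degenerating on $O$.

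\textbf{Existence.} This is where Theorem~\ref{P:1} enters. The relation $\omega_1=\iota^*\omega_2$ and the duality $\nabla_2=\nabla_1^*$ constrain the image of $\Omega_p$: its two components are related by the antiholomorphic polarity, so $\operatorname{im}\Omega_p$ is a real-totally-real slice cut out by $\iota$. The plan is to verify that $(\nabla_1\mu_1,\nabla_2\mu_2)$ satisfies the same $\iota$-relation. The reason to expect this is $\Pi_2=\iota\circ\Pi_1$, which gives $\mu_2=\iota\circ\mu_1$ up to the centroid averaging. If averaging commutes with the linear part of $\iota$, which holds because $\iota$ is affine on an affine chart, the compatibility holds automatically. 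Solving then gives a unique $v(p)$ at each point.

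\textbf{Reconstruction.} Reconstruction is the globalization step and is the main obstacle. The field $p\mapsto v(p)$ defines a line field, and one must show that parallel transport of $\mu_1$ along $\nabla_1$ and of $\mu_2$ along $\nabla_2$ produce two families of submanifolds whose intersection is $O$. I would first try a Frobenius argument: the integrability obstruction is the curvature. By the proof of Theorem~\ref{P:1}, $F_{\iota^*\nabla_1}=F_{\nabla_1^*}$, so the curvature contributions of the two transports cancel on the incidence variety $A\subset B\times T$ of the double-fibration picture. That makes the combined distribution integrable, with $O$ as a leaf through an initial point. If this cancellation fails for the tangent bundles $E_i$ (Theorem~\ref{P:1} was stated for line bundles, and $T\mathbb{CP}^2$ has non-abelian curvature), I would retreat to the determinant line bundles $\det E_i\cong\mathcal{O}(3)$. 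There Theorem~\ref{P:1} applies verbatim, and I would claim reconstruction only up to the ambiguity invisible to $\det$, namely projective transformations. This matches the "modulo projective transformations" clause in the uniqueness statement.
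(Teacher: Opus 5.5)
Your uniqueness and existence steps cannot both hold. Injectivity of $\Omega_p$ on $T_p\mathbb{CP}^3$ rests on reading ``independent'' as ``distinct centers''. But your existence step, like the paper's proof, uses $\Pi_2=\iota\circ\Pi_1$. By the chain rule $d\Pi_2=d\iota\circ d\Pi_1$ with $d\iota$ invertible, so $\ker d\Pi_2=\ker d\Pi_1$. The two maps therefore have the same center and the same fibres, and $\ker d\Pi_1\cap\ker d\Pi_2$ is the complex line at $p$ pointing toward that center, of real dimension $2$. Your own existence argument gives the same conclusion. If the second component of $\Omega_p(v)$ is determined by the first through $\iota$, then $\ker\Omega_p=\ker(v\mapsto v\cdot\omega_1)$. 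A real-linear map from the real $6$-dimensional $T_p\mathbb{CP}^3$ to $\mathbb{C}^2\cong\mathbb{R}^4$ has kernel of real dimension at least $2$. So in $T_p\mathbb{CP}^3$ the solution is determined only modulo this kernel, which is not ``modulo scaling''. Your explanation of the scaling also fails: rescaling homogeneous representatives does not change the affine centroids $\mu_i\in\mathbb{C}^2$. The paper argues from immersivity of $\Pi_i|_O$ instead. That hypothesis gives injectivity only on $T_pO$, so the paper's claim of real rank $6$ on $T_p\mathbb{CP}^3$ overstates it. Under your identification of $\omega_i$ with $d\Pi_i$, what can actually be proved is uniqueness of $v_p$ within $T_pO$, which is where the paper's proof lands. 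It already follows from a single projection.

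The reconstruction step is also missing an idea. The field $p\mapsto v(p)$ spans a line field, which is trivially involutive and whose leaves are curves. So $O$, of real dimension $3$, cannot be a leaf, and Frobenius needs a rank-$3$ distribution. The paper supplies one. It solves modified systems $\nabla_i(\mathbf{R}_{\theta_{ik}}\mu_i)=v^{(k)}\cdot\omega_i$ with rotated moment maps, choosing the rotations so that $v^{(1)},v^{(2)},v^{(3)}$ span $T_pO$. It also fixes the starting point $p_0$ by the B\'ezout count with $d_1d_2=1$. Your plan has neither step. Your integrability mechanism, that the curvatures ``cancel on the incidence variety'', does not connect $F_{\nabla_i}$ to Lie brackets of vector fields on $O$. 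The paper's bracket claim is likewise only asserted, so this step needs a real argument in either version. Finally, the fallback to $\det E_i$ keeps only $\operatorname{tr}F_{\nabla_i}$ and discards the trace-free part. What is lost has nothing to do with projective transformations of $\mathbb{CP}^3$, so the match with ``modulo projective transformations'' is purely verbal.
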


\begin{proof}
\begin{enumerate}
    \item {\it Orientation and Duality of Connections}:
We prove that the involution $\iota$ forces the connections $\nabla_1$ and $\nabla_2$ to be dual, with 
\[
F_{\nabla_1} = -F_{\nabla_2},
\]
resolving orientation ambiguities.

By Theorem \ref{P:1} $\Pi_2 = \iota \circ \Pi_1$, thus the pullback bundles satisfy:
\[
E_2 = \Pi_2^* T \mathbb{CP}^2 = (\iota \circ \Pi_1)^* T \mathbb{CP}^2 = \Pi_1^* \left( \iota^* T \mathbb{CP}^2 \right).
\]
As $\iota$ is anti-holomorphic, we have:
$
\iota^* T \mathbb{CP}^2 \cong \overline{T \mathbb{CP}^2},$
so $E_2 \cong \overline{E_1}.$

\, 

The curvature $F_{\nabla_i}$ is the pullback of the curvature $\Theta$ of the bundle $\left( T \mathbb{CP}^2, g_{FS} \right)$, where $g_{FS}$ is the Fubini-Study metric. As $g_{FS}$ is Kähler-Einstein with Kähler form $\omega_{FS}$ and:
$\iota^* \omega_{FS} = -\omega_{FS},$
it follows that:
\[
\iota^* \Theta = -\Theta.
\]
Thus:
\[
F_{\nabla_2} = \Pi_2^* \Theta = (\iota \circ \Pi_1)^* \Theta = \Pi_1^* \left( \iota^* \Theta \right) = \Pi_1^* ( -\Theta ) = - F_{\nabla_1}.
\]
This antisymmetry fixes the relative signs in parallel transport, resolving orientation ambiguities.

    \item {\it Uniqueness of Solution}:
Since the projections $\Pi_i$ are immersive, the differentials:
\[
d\Pi_i : T_p O \to T_{\Pi_i(p)} \mathbb{CP}^2
\]
are injective. This implies:
\[
\ker(\omega_1) \cap \ker(\omega_2) = \{ 0 \} \subset T_p \mathbb{CP}^3.
\]
Locally, the forms $\omega_1, \omega_2$ form a full-rank system on $T_p O$. We  build a \textbf{linear system for $v\in T_p \mathbb{CP}^3$}:
\[
\begin{cases}
\nabla_1 \mu_1 = v \cdot \omega_1, \\
\nabla_2 \mu_2 = v \cdot \omega_2,
\end{cases}
\]
The above system defines a linear map:
\[
A_p : T_p \mathbb{CP}^3 \to \mathbb{C}^4, \quad A_p(v) = \left( \omega_1(v), \omega_2(v) \right).
\]
The domain of $A_p$ is $\operatorname{dom} A_p = T_p \mathbb{CP}^3$ of real dimension 6 and the codomain is of real dimension $8$. By the immersivity property, $\operatorname{rank}_\mathbb{R} A_p = 6$ (full rank) and so $\ker A_p$ is 0-dimensional. The compatibility condition $\omega_1 = \iota^* \omega_2$ ensures that $\nabla_i \mu_i \in \operatorname{im} A_p$, guaranteeing a unique solution:
\[
v_p \in T_p O \subset T_p \mathbb{CP}^3.
\]
  \item {\it Reconstruction}. The unique solution $v_p\in T_pO$ defines a smooth vector field $v$ on $O$. To reconstruct the object $O$, we introduce an initial point $p_0 \in \mathbb{CP}^3$ and solve the algebraic equations from projections and moments:
\[
\Pi_1(p_0) = z_1, \quad \Pi_2(p_0) = \iota(z_1),\quad
\mu_1(p_0) = c_1, \quad \mu_2(p_0) = c_2.
\]
These define algebraic varieties:
$V_{\Pi_i} = \{ p \mid \Pi_i(p) = z_i \} \cong \mathbb{CP}^1 \quad (\text{linear, degree 1})$,
and $V_{\mu_i} = \{ p \mid \mu_i(p) = c_i \} \quad (\text{degree } d_i \text{ in homogeneous coordinates}).$
By Bézout’s theorem, the intersection number is:
\[
[V_{\mu_1}] \cdot [V_{\mu_2}] \cdot [V_{\Pi_1}] \cdot [V_{\Pi_2}] = d_1 d_2 \times 1 \times 1.
\]
If $O$ is non-symmetric (no non-trivial automorphisms), the varieties intersect transversely, yielding a unique solution $p_0$ when $d_1 d_2 = 1$, e.g., when the $\mu_i$ are linear.

\medskip 

We discuss now the vector fields spanning $TO$.  The solution $v$ gives one vector field $v^{(1)}$. We extend it to three independent vector fields $\{v^{(1)}, v^{(2)}, v^{(3)}\}$ spanning $TO$, obtained by solving the reconstruction equations for basis vectors. We define $ v^{(2)}, v^{(3)}$ by solving {\it modified} reconstruction equations. These are obtained by rotating the moment maps (i.e. such as $\nabla_i({\bf R}_{\theta_{ik}}\mu_i)=v^{(k)}\cdot \omega_i$ where ${\bf R}_{\theta}$ is a rotation in $\mathbb{C}^2$,the indices are $i=\{1,2\}$, $k=\{2,3\}$). We choose the rotations so that $\{v^{(1)}, v^{(2)}, v^{(3)}\}$ span $T_pO$, at each $p$ and so as to ensure full rank of the linear system $A_pv^{(k)}=b^{(k)}$. Since $\operatorname{rk}_{\mathbb{R}}(A_p)=6$ and $\operatorname{dim}=3$, the system admits solutions $v^{(k)}\in T_pO$ as long as $b^{(k)}\in  \operatorname{im}(A_p)$. This holds by the curvature duality $\omega_1=\iota^*\omega_2$.

\medskip 
The vector fields $\{v^{(1)}, v^{(2)}, v^{(3)}\}$  span a 3D distribution $D\subset TO$. To reconstruct $O$ as the integral manifold of $D$, we must verify that $D$ is involutive. Due to curvature duality $F_{\nabla_1} = -F_{\nabla_2}$, we have 

\[
[v^{(i)}, v^{(j)}] \in \operatorname{span} \{ v^{(1)}, v^{(2)}, v^{(3)}\} \quad \forall i,j. 
\]

Frobenius' theorem ensures that the maximal integral manifold through $p_0\in O$ is precisely $O$ itself.

\medskip 
Finally, the reconstruction step follows from integrating flows from $p_0$ (uniquely determined by Bezout).  
\end{enumerate}
\end{proof}

\section{Conclusion}
In this paper, we have introduced a novel geometric framework for reconstruction problems, unifying elements of inverse problem theory with modern topological analysis. By harnessing the intrinsic invariants arising from Vaisman--Neifeld’s approach---most notably, the concept of the \emph{Vaisman centroid}---we have demonstrated that one can effectively resolve the ambiguity inherent in recovering hidden structures from incomplete and noisy data. This perspective, which departs fundamentally from traditional persistent homology methods, provides a robust and computationally tractable pathway for Topological Data Analysis.

Our approach recasts the reconstruction challenge as a stratification of the configuration space into transverse foliations, wherein the Vaisman centroid acts as a critical invariant for uniquely characterizing equivalence classes of solutions. 

\end{document}